\email{hasannas@math.tu-berlin.de}}
\begin{document}
\thispagestyle{plain}
\TitleHeader

\section{Introduction}
\label{sec:introduction}
Phase retrieval was introduced in \cite{patterson1934fourier} as a
problem of reconstructing a signal from its Fourier magnitude and has
become increasingly popular in image and signal processing due to its
applications in crystallography \cite{Hau91,KH91,millane1990phase},
astronomy \cite{BS79,fienup1987phase}, and laser optics
\cite{SSD+06,SST04}.  In all these applications, phase retrieval
occurs as ill-posed inverse problem, where the tremendous
ambiguousness is the most critical point.  For the classical problem,
the non-uniqueness has been well studied, and there are several
approaches to surmount this issue by enforcing a priori assumptions or
exploit additional measurements, see for instance
\cite{ADGY19,BP15,BP20,BBE17,BS79,grohs2019mathematics,HHLO83,KK14,KST95,SECC+15}
and references therein.  Moreover, phase retrieval also occurs in the
more abstract setting of frames, where the unknown image has to be
recovered from the magnitudes of its frame coefficients.  For generic
and specific frames, this problem has been studied in
\cite{alexeev2014phase,alexeev2012full,BBCE09,balan2006signal,CESV13,CSV13}.
The purpose of the current paper is to consider phase retrieval in the
setting of dynamical sampling \cite{ACMT17,AHP19,AK16,AP17}, which
originate back to sampling and recovering diffusion fields form
spatiotemporal measurements \cite{LV09,RCLV11}.

In dynamical sampling, we consider an unknown vector
$\Vek x\in \BC^d$ that evolves under the action of a matrix
$\Mat A\in \BC^{d\times d}$ meaning that at time $\ell\in\BN$ the
signal becomes $\Vek x_\ell = (\Mat A^*)^\ell \Vek x$.   Our aim
is to recover  $\Vek x$ up to global phase from
phaseless measurements.  More precisely, we want to recover $\Vek x$ form
\begin{equation}\label{def:measurements}
  |\langle (\Mat A^*) ^\ell\Vek x ,\Vek \phi \rangle |
  =
  |\langle \Vek x , \Mat A^\ell \Vek \phi \rangle |,
\end{equation}
where $\ell = 0, \dots, L-1$ with $L\geq d$, and where
$\Vek \phi \in \BC^d$ is some sampling vector.  Phase retrieval in
dynamical sampling has been already considered for real Hilbert spaces
in \cite{aldroubi2020phaseless,aldroubi2017phase}, where the authors
provided conditions to ensure that
$\{\Mat A^{\ell} \Vek\phi\}_{\ell=0}^{ L-1} $ has the complementary
property meaning that each subset or its complement spans the entire
space.  The results have then be generalized to several sampling
vectors.  The complementary property is here equivalent to the
uniqueness (up to global phase) of phase retrieval from
\eqref{def:measurements}.  However, their techniques cannot be
immediately generalized to the complex setting since here the
complementary property is not sufficient
\cite{balan2006signal,BCMN14}.

To insure that we can do phase retrieval in $\BC^d$, we will assume
that $\{\Mat A^\ell \Vek \phi\}_{\ell=0}^{L-1}$ is a frame, and we
will align $\Vek \phi$ with specifically chosen additional sampling
vectors to exploit polarization techniques.  This idea is inspired by
interferometry used in \cite{alexeev2014phase,Bei17b}.  Using the
extra information, we first recover the frame coefficients
$\langle \Vek x , \Mat A^\ell\Vek \phi\rangle$ up to global phase
and then recover $\Vek x$ in a stable way via the dual frame of
$\{\Mat A^\ell \Vek \phi\}_{\ell=0}^{L-1}$.

The paper is organized as follows. In Section \ref{sec:preleminaries}
we set the stage by providing the necessary background information
about polarization identities, frames and Vandermonde matrices. In
Section \ref{sec:dynamical-frames} we find conditions on the spectrum
of $\Mat A$ and the vector $\Vek \phi$ such that the iterated set
$\{ \Mat A^\ell \Vek \phi\}_{\ell=0}^{L-1}$ is a frame. Moreover, in
Section~\ref{sec: Full_spark_dynamical_frames}, we provide conditions
under which this frame has full spark. In Section
\ref{sec:phase-retr-dynam} we prove that the aligned sampling vectors
allow phase retrieval for almost all $\Vek x\in\BC^d$; moreover, if
the underlying dynamical frame has full spark, the recovery of all
$\Vek x\in\BC^d$ is possible.

\section{Preleminaries}
\label{sec:preleminaries}
\subsection{Polarization and Relative Phases}
\label{subsec:polar-relat-phas}

Our main results are based on the following polarization technique,
which allow the recovery of the lost phases from certain phaseless
information.

\begin{theorem}[Polarization, \cite{Bei17b}]
  \label{thm:polarization}
  Let $\alpha_1, \alpha_2 \in \BR$ satisfy $\alpha_1 -
  \alpha_2 \not\in \pi \BZ$.  Then, for every $z_1, z_2 \in
  \BC \setminus \{0\}$, the product $\bar z_1 z_2$ is uniquely determined by
  \begin{equation*}
    \absn{z_1}, \quad
    \absn{z_2}, \quad
    \absn{z_1 + \e^{\I\alpha_1} \, z_2}, \quad
    \absn{z_1 + \e^{\I\alpha_2} \, z_2}.
  \end{equation*}
\end{theorem}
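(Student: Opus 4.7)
The plan is to expand the two squared magnitudes $|z_1 + \e^{\I\alpha_k}z_2|^2$ for $k=1,2$ using the identity $|a+b|^2 = |a|^2+|b|^2+2\Re(\bar a b)$, which converts the four phaseless quantities into a linear system for the real and imaginary parts of $w := \bar z_1 z_2$. The non-vanishing of a certain sine will then guarantee unique solvability, which is exactly where the hypothesis $\alpha_1-\alpha_2\notin\pi\BZ$ enters.

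Concretely, I would first observe
\begin{equation*}
  \absn{z_1 + \e^{\I\alpha_k}z_2}^2
  = \absn{z_1}^2 + \absn{z_2}^2 + 2\Re\!\bigl(\e^{\I\alpha_k}\,\bar z_1 z_2\bigr),
  \qquad k=1,2.
\end{equation*}
Since $|z_1|$, $|z_2|$, and the two mixed magnitudes are assumed known, the two real numbers $\Re(\e^{\I\alpha_1}w)$ and $\Re(\e^{\I\alpha_2}w)$ are uniquely determined. Writing $w = u + \I v$ with $u,v \in \BR$, expanding $\Re(\e^{\I\alpha_k}w) = \cos(\alpha_k)\,u - \sin(\alpha_k)\,v$ yields the $2\times2$ linear system
\begin{equation*}
  \begin{pmatrix} \cos\alpha_1 & -\sin\alpha_1 \\ \cos\alpha_2 & -\sin\alpha_2 \end{pmatrix}
  \begin{pmatrix} u \\ v \end{pmatrix}
  =
  \begin{pmatrix} \Re(\e^{\I\alpha_1}w) \\ \Re(\e^{\I\alpha_2}w) \end{pmatrix}.
\end{equation*}

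The remaining step—really the only substantive point—is to check that the coefficient matrix is invertible. Its determinant equals $\sin(\alpha_1)\cos(\alpha_2)-\cos(\alpha_1)\sin(\alpha_2) = \sin(\alpha_1-\alpha_2)$, which is nonzero precisely because $\alpha_1-\alpha_2\notin\pi\BZ$. Hence $(u,v)$, and therefore $w = \bar z_1 z_2$, is uniquely recoverable from the four given magnitudes.

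I do not expect any genuine obstacle here: the argument is essentially a two-line polarization identity followed by a determinant calculation. The only subtlety worth flagging is that the assumption $z_1,z_2\ne 0$ is not actually needed for the uniqueness argument itself (the linear system is solved regardless); it is presumably included in the statement for consistency with later applications where the product $\bar z_1 z_2$ is used to extract a relative phase.
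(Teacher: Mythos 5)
Your proof is correct and follows essentially the same route as the paper: expand the two mixed magnitudes via $\absn{a+b}^2=\absn{a}^2+\absn{b}^2+2\Re(\bar a b)$ and solve a $2\times 2$ linear system whose determinant is $\sin(\alpha_1-\alpha_2)$. The only difference is that you solve directly for the real and imaginary parts of $w=\bar z_1 z_2$, whereas the paper first divides by $2\absn{z_1}\absn{z_2}$ and solves for $\cos(\phi_2-\phi_1)$ and $\sin(\phi_2-\phi_1)$; your parametrization has the minor advantage of not requiring $z_1,z_2\neq 0$, as you correctly observe.
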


\begin{proof}
  On the basis of the polar decomposition
  $z_\ell = \absn{z_\ell} \, \e^{\I \phi_\ell}$ with
  $\ell \in \{1,2\}$, the last two absolute values are equivalent to
  \begin{equation*}
    \absn{z_1
      + \e^{\I \alpha_\ell} \, z_2}^2
    = \absn{z_1}^2
    + \absn{z_2}^2
    + 2 \absn{z_1}
    \absn{z_2}
    \Re [\e^{\I ( \phi_2 - \phi_1 + \alpha_\ell )}].
  \end{equation*}
  Since $z_1$ and $z_2$ are non-zero, we can thus extract the real parts
  \begin{equation*}
    r_\ell \coloneqq \Re \bigl[ \e^{\I ( \phi_2 - \phi_1 + \alpha_\ell)} \bigr].
  \end{equation*}
  Using Euler's formula, we obtain the linear equation system
  \begin{align*}
    r_1
    &= \cos( \alpha_1 )
      \cos( \phi_2 - \phi_1)
      - \sin( \alpha_1)
      \sin( \phi_2 - \phi_1)
    \\
    r_2
    &= \cos( \alpha_2 )
      \cos( \phi_2 - \phi_1)
      - \sin( \alpha_2)
      \sin( \phi_2 - \phi_1).
  \end{align*}
  The determinant of the system matrix is here
  \begin{equation*}
    \det
    \begin{psmallmatrix}
      \cos \alpha_1 & -\sin \alpha_1 \\
      \cos \alpha_2 & -\sin \alpha_2
    \end{psmallmatrix}
    = \sin \mleft( \alpha_1 - \alpha_2 \mright),
  \end{equation*}
  which is non-zero by assumption; so $\cos(\phi_2-\phi_1)$ and
  $\sin(\phi_2 - \phi_1)$ are uniquely determined by the given data.
  Knowing the relative phase $\phi_2 - \phi_1$, we calculate
  $\bar z_1 z_2$.  \qed
\end{proof}

\begin{remark}[Real polarization] 
  For every $z_1,z_2 \in \BR \setminus \{0\}$, the product $z_1 z_2$
  is uniquely determined by $\absn{z_1}$, $\absn{z_2}$, and
  $\absn{z_1 +\alpha z_2}$ with $\alpha \in \{-1,1\}$ because
  $\absn{z_1 +\alpha z_2}^2 = z_1^2 + z_2^2 + 2\alpha z_1 z_2$.
\end{remark}

\begin{remark}[Polarization identities\cite{alexeev2014phase,Bei17b}]
  For certain $\alpha_1$ and $\alpha_2$ as in
  Theorem~\ref{thm:polarization}, the phase of $z_1, z_2$ can be
  computed without solving a linear equation system.  More generally,
  if $\zeta_K$ is chosen to be the $K$th root of unity, then we have
  \begin{equation*}
    \bar z_1 z_2
    = \frac{1}{K} \sum_{k=0}^{K-1} \zeta_K^k \absn{ z_1 +
      \zeta_K^{-k} \, z_2}^2.
  \end{equation*}
\end{remark}

\subsection{Frames}
\label{subsec:Frames}

Given a matrix $\Mat A\in \BC^{d\times d}$ and a vector
$\Vek \phi\in \BC^d$, the set
$\{\Mat A^\ell \Vek \phi\}_{\ell=0}^{L-1}$ is called a \emph{dynamical
  frame} if it spans $\BC^{d}$. Dynamical frames were first introduced
in \cite{ACMT17} in order to recover a signal evolving in time from
certain time-space measurements, where also the infinite dimensional
problem is addressed. The topic was further developed in
\cite{AHP19,AP17,AK16,cabrelli2020dynamical,christensen2017operator,christensen2018dynamical,christensen2020frame,aldroubi2017iterative,philipp2017bessel,RCLV11}. An
arbitrary vector $\Vek x\in\BC^d$ can be recovered from the set
$\{ \langle \Mat A^\ell\Vek x ,\Vek \phi \rangle \}_{\ell=0}^{L-1}$ in
an stable way if there exists $\alpha,\beta>0$ such that
\[	\alpha\| \Vek y\|^2 \leq \| \{  \langle \Mat A^\ell\Vek y ,\Vek \phi \rangle \}_{\ell=0}^{L-1}\|^2 \leq \beta \|\Vek y\|^2, \quad \text{for all } \Vek y\in \BC^d,
\]
i.e., when  the set $\{ ( \Mat A^T)^\ell\Vek \phi  \}_{\ell=0}^{L-1}$ is a frame for $\BC^d$.

For any dynamical frame $\{\Mat A^\ell \Vek \phi\}_{\ell=0}^{L-1}$ there exists a set of vectors in the form $\{\Mat B^\ell \tilde{\Vek\phi}\}_{\ell=0}^{L-1}$ such that every $\Vek x\in \BC^d$ can be written as 
\begin{equation}\label{def: reconstruction_formula}
	\Vek x = \sum_{\ell=0}^{L-1} \langle  \Vek x, \Mat A^\ell \Vek \phi \rangle \Mat B^\ell \Vek \tilde\phi.
\end{equation}
Indeed for a given frame $\{\Mat A^\ell \Vek \phi\}_{\ell=0}^{L-1}$,
the frame matrix
$\Mat T \coloneqq \sum_{\ell=0}^{L-1}  \Mat A^\ell
\Vek \phi (\Mat A^\ell \Vek \phi)^* $ is symmetric and positive
definite and the canonical dual frame
$\{\Mat T^{-1}\Mat A^\ell \Vek \phi\}_{\ell=0}^{L-1}$ can be written
in the form $\{\Mat B^\ell \tilde{\Vek\phi}\}_{\ell=0}^{L-1}$ where
$\Mat B \coloneqq \Mat T^{-1}\Mat A\Mat T$ and
$\tilde{\Vek \phi}\coloneqq\Mat T^{-1}\Vek \phi$. For more information
about frames, we refer to \cite{christensen2016introduction}.

\begin{example}
  Let $d=2$, and consider the rotation matrix
  $\Mat A = \bigl(\begin{smallmatrix}
    \cos\theta & -\sin\theta \\
    \sin\theta &\cos\theta
	\end{smallmatrix}\bigr)$
	with $\theta\neq k\pi$ for $ k\in\BZ$.  For every nonzero
        vector $\Vek\phi\in \BC^2$ and $L\geq 2$ the set
        $\{ \Mat A^\ell \Vek\phi\}_{\ell=0}^{L-1}$ is a dynamical
        frame for $\BC^2$.
\end{example}

\subsection{Vandermonde Matrices}

As we will see in Section \ref{sec:dynamical-frames}, the frame
property of the set $\{\Mat A^\ell\Vek \phi\}_{\ell=0}^{L-1}$ is
highly related to the Vandermonde matrix generated by the  vector
$\Vek\lambda$ whose coordinates consists  of the eigenvalues of the
matrix $\Mat A$. There are different types of Vandermonde matrices in
the literature. We will need the following kinds. 

\paragraph{The Classical Vandermode Matrix} For
$\Vek \lambda \coloneqq (\lambda_0,\dots,\lambda_{d-1})^\T \in \BC^d$,
the Vandermonde matrix $\Mat V_{\Vek\lambda}\in \BC^{d\times L}$
generated by $\Vek \lambda$ is defined as
\begin{equation*}
	\Mat V_{\Vek \lambda} := ( \lambda_k^{\ell})_{k,\ell=0}^{d-1,L-1}.
\end{equation*}
The determinant of  a square Vandermonde matrix $\Mat V_{\Vek\lambda}\in \BC^{d\times d}$  equals to 
\[
\det\Mat V_{\Vek \lambda} = \prod_{0\leq k < j\leq d-1}(\lambda_k - \lambda_j).
\]

\paragraph{Generalization of the First Kind}
A generalized Vandermonde matrix of the first kind is a matrix
consisting of selective columns of $\Mat V_{\Vek \lambda}$. More
precisely, for a vector
$\Vek \lambda \coloneqq (\lambda_0,\dots,\lambda_{d-1})^\T \in \BC^d$
and $\Vek m \coloneqq (m_0, \dots, m_{L-1})^\T \in \BN_0^L$, the
Vandermonde matrix $\Mat V_{\Vek \lambda,\Vek m}\in \BC^{d\times L}$
is defined as
\begin{equation*}
	\Mat V_{\Vek \lambda, \Vek m}
	\coloneqq \bigl( \lambda_k^{m_\ell} \bigr)_{k,\ell=0}^{d-1,L-1},
\end{equation*}
The  Vandermonde determinant of the first kind may be
factorized by
\begin{equation}
  \label{eq:gen-vand-det}
  \det \Mat V_{\Vek \lambda, \Vek m}
  = \biggl( \prod_{k > j} (\lambda_k - \lambda_j) \biggr) \,
  S(\Vek \lambda).
\end{equation}
where $S$ is a symmetric polynomial in $\Vek \lambda$ with
non-negative, integer coefficients \cite{DVB08}.  The occurring
polynomials $S$ are better known as Schur functions \cite{Mac92,Mac95}.

\paragraph{Generalization of the Second Kind}
The second kind generalized Vandermonde matrix
$\widetilde{\Mat V}_{\Vek \lambda, \Mat m}\in \BC^{d\times L}$ is
defined as
\begin{equation*}
  \widetilde{\Mat V}_{\Vek \lambda, \Mat m}
  \coloneqq
  \begin{bmatrix}
    \Mat R_0 \\ \vdots \\ \Mat R_{M-1}
  \end{bmatrix}
  \qquad\text{with}\qquad
  \Mat R_j \coloneqq \biggl( \binom \ell k \, \lambda_j^{\ell - k}
  \biggr)_{k,\ell = 0}^{m_j-1,L-1},
\end{equation*}
where $M\in\BN$, $\Vek \lambda \in \BC^{M}$ and $\Vek m \in \BN^{M}$
such that $\absn{\Vek m} \coloneqq \sum_{j=0}^{M-1} \absn{m_j} =
d$. Clearly if $\Vek m$ is the unite vector, i.e.,
$\Vek m=(1,\dots,1)^T\in \BN^M $, then
$\widetilde{\Mat V}_{\Vek \lambda,\Vek m}$ equals the Vandermonde
matrix $\Mat V_{\Vek \lambda}$.  The determinant is given by
\begin{equation}
  \label{eq:general-vand-det:1}
  \det (\widetilde{\Mat V}_{\Vek \lambda, \Vek m})
  = \smashoperator{\prod_{0 \le k < j \le M-1}} ( \lambda_j -
  \lambda_k)^{m_k m_j},
\end{equation}
see \cite{Ait44,Kal84}.  Obviously, a square Vandermonde matrix
$ \widetilde{\Mat V}_{\Vek \lambda, \Vek m}$ is invertible precisely when 
$\Vek \lambda$ has distinct elements.

\begin{example}
  For $\Vek \lambda \coloneqq (\lambda_0, \lambda_1, \lambda_2)^\T$,
  $\Vek m \coloneqq (3,1,2)^\T$ and $L=d=6$, we have
  \begin{equation*}
    \widetilde{\Mat V}_{\Vek \lambda, \Vek m} \coloneqq
    \begin{bmatrix}
      1 & \lambda_0 & \lambda_0^2 & \lambda_0^3 & \lambda_0^4 &
      \lambda_0^5 \\
      0 & 1 & 2 \, \lambda_0 & 3 \, \lambda_0^2 & 4 \, \lambda_0^3 & 5
      \, \lambda_0^4 \\
      0 & 0 & 1 & 3 \, \lambda_0 & 6 \, \lambda_0^2 & 10 \, \lambda_0^3
      \\ 
      1 & \lambda_1 & \lambda_1^2 & \lambda_1^3 & \lambda_1^4 &
      \lambda_1^5 \\
      1 & \lambda_2 & \lambda_2^2 & \lambda_2^3 & \lambda_2^4 &
      \lambda_2^5 \\
      0 & 1 & 2 \, \lambda_2 & 3 \, \lambda_2^2 & 4 \, \lambda_2^3 & 5
      \, \lambda_2^4 \\
    \end{bmatrix}.
  \end{equation*}
\end{example}

\section{Dynamical Frames}
\label{sec:dynamical-frames}

To recover a signal from \eqref{def:measurements},
we first study conditions on the matrix
$\Mat A\in\BC^{d\times d}$ and the vector $\Vek \phi\in\BC^d$ such
that $\{\Mat A^\ell \Vek\phi\}_{\ell=0}^{L-1}$ is a frame for
$\BC^d$. The cornerstone is here the Jordan canonical form of $\Mat A$.
More precisely, every matrix $\Mat A \in \BC^{d \times d}$ is similar
to a so-called Jordan matrix meaning that there exists an
invertible matrix $\Mat S \in \BC^{d \times d}$ such that
$\Mat A = \Mat S \Mat J \Mat S^{-1}$ and $\Mat J \in \BC^{d \times d}$
is a blocked diagonal matrix of the
form 
\begin{equation*}\label{Jordan-form}
	\Mat J= \text{diag}(\Mat J_0,\dots, \Mat J_{M-1})
        \qquad\text{with}\qquad
	\Mat J_j= 
	\begin{pmatrix}
		\lambda_j&1&&\\
		&\lambda_j&\ddots&\\
		&&\ddots&1\\
		&&&\lambda_j
	\end{pmatrix}
	\in \BC^{m_j \times m_j},
\end{equation*}
where $\lambda_j$ is the $j$th eigenvalue and $m_j$ the corresponding
algebraic multiplicity, and where the columns of
$\Mat S = [\Vek S_0| \dots |\Vek S_{M-1}]$ with blocks
$ \Mat S_j = [\Vek s_{j,0}| \dots| \Vek s_{j,m_{j}-1}]$ span the
generalized eigenspaces of $\Mat A$.  Further, we have
$(\Mat A-\lambda_j\Mat I)^{k+1}\Vek s_{j,k}=0$ but
$(\Mat A-\lambda_j\Mat I)^{k}\Vek s_{j,k}\neq 0$ for
$k=0,\dots,m_j-1$.  The Jordan chain $\Mat S_j$ related to $\lambda_j$
is generated by $\Vek s_{j,m_j-1}$ via
$\Vek s_{j,k} = (\Mat A-\lambda\Mat I)^{m_j-k-1}\Vek s_{j,m_j-1}$.  We
say that $\Vek \phi$ depends on the Jordan generator or leading
generalized eigenvector $\Vek s_{j,m_j-1}$ if
$(\Mat S^{-1}\Vek \phi)_{k-1} \neq 0$ where $k=\sum_{i=0}^{j-1} m_i$.
For pairwise distinct eigenvalues $\lambda_j$ as usually assumed in
the following, the generators are unique up to scaling.  In this case,
$\Mat S$ is unique up to scaling and permutation of the blocks
$\Mat S_j$.  Finally we notice that the $\ell$th power of a Jordan
matrix and the corresponding Jordan blocks are given by
\begin{equation*}
	\Mat J^\ell= \text{diag}(\Mat J^\ell_0,\dots, \Mat J^\ell_{M-1})
        \qquad\text{with}\qquad
	\Mat J_j^\ell = \biggl( \binom \ell {n - k} \, \lambda_j^{\ell-n+k}
	\biggr)_{k,n= 0}^{m_j - 1}.
\end{equation*}

The following two theorems are special cases of \cite{ACMT17}, where
the construction of a frame by iterated actions of $\Mat A$ on a
finite set of sampling vectors $\{\Vek\phi_j\}\subset \BC^d$ is
studied.  In difference to \cite{ACMT17}, we provide brief, direct proofs
based on the Vandermonde determinant for the case that $\Mat A$ acts
on a single generator $\Vek \phi$.

\begin{theorem}[Dynamical basis]
	\label{thm:dyn-basis}
	Let $\Mat A \in \BC^{d \times d}$ be arbitrary.  Then
	$\{\Mat A^\ell \Vek \phi \}_{\ell = 0}^{d-1}$ is a basis if and only
	if the eigenvalues of the Jordan blocks of $\Mat A$ are pairwise
	distinct and  $\Vek \phi$ depends on all Jordan generators.
\end{theorem}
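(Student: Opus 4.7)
The plan is to pass to the Jordan canonical form $\Mat A = \Mat S\Mat J\Mat S^{-1}$ and reduce the question to an explicit determinant. Since $\Mat A^\ell\Vek\phi = \Mat S\,\Mat J^\ell\Vek c$ with $\Vek c \coloneqq \Mat S^{-1}\Vek\phi$, and $\Mat S$ is invertible, the set $\{\Mat A^\ell\Vek\phi\}_{\ell=0}^{d-1}$ is a basis of $\BC^d$ if and only if the $d\times d$ matrix
\[
\Mat M \coloneqq \bigl[\Mat J^0\Vek c \mid \Mat J^1\Vek c \mid \cdots \mid \Mat J^{d-1}\Vek c\bigr]
\]
is invertible. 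The strategy is then to factor $\Mat M$ in a way that isolates the second-kind generalized Vandermonde $\widetilde{\Mat V}_{\Vek\lambda,\Vek m}$, whose determinant is already known via \eqref{eq:general-vand-det:1}.

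Concretely, splitting $\Vek c$ into blocks $\Vek c_j\in\BC^{m_j}$ conforming with the Jordan blocks of $\Mat J$, I would compute $(\Mat J_j^\ell\Vek c_j)_k$ using the explicit formula for $\Mat J_j^\ell$ given in the excerpt and, after a shift of the summation index, read it as a linear combination of the rows of the $j$th row-block $\Mat R_j$ of $\widetilde{\Mat V}_{\Vek\lambda,\Vek m}$. This will identify the $j$th row-block of $\Mat M$ as $\Mat T_j\Mat R_j$, where $\Mat T_j$ is the $m_j\times m_j$ upper anti-triangular matrix with entries $(\Mat T_j)_{k,p} = (\Vek c_j)_{k+p}$ for $k+p\le m_j-1$ (and zero otherwise), whose constant anti-diagonal equals $(\Vek c_j)_{m_j-1}$. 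Assembling the $M$ row-blocks yields
\[
\Mat M = \text{diag}(\Mat T_0,\ldots,\Mat T_{M-1})\,\widetilde{\Mat V}_{\Vek\lambda,\Vek m}.
\]

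From this point everything is routine: $\det\Mat T_j = \pm\bigl((\Vek c_j)_{m_j-1}\bigr)^{m_j}$, and together with formula \eqref{eq:general-vand-det:1} this makes $\det\Mat M$ a nonzero scalar multiple of $\prod_j \bigl((\Vek c_j)_{m_j-1}\bigr)^{m_j}\prod_{k<j}(\lambda_j-\lambda_k)^{m_km_j}$. Hence $\Mat M$ is invertible exactly when the eigenvalues $\lambda_0,\ldots,\lambda_{M-1}$ are pairwise distinct and every leading block-coefficient $(\Vek c_j)_{m_j-1}$ is nonzero; the latter condition is precisely the requirement that $\Vek\phi$ depends on each Jordan generator $\Vek s_{j,m_j-1}$, closing both directions of the equivalence.

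The main obstacle I anticipate is setting up the factorization $\Mat M = \text{diag}(\Mat T_j)\,\widetilde{\Mat V}_{\Vek\lambda,\Vek m}$ cleanly — in particular, keeping the shifted summation index, the truncation of $\Mat T_j$ above its anti-diagonal, and the row-block alignment all straight. Once this factorization is in place, formula \eqref{eq:general-vand-det:1} carries the rest of the proof.
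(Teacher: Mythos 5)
Your proposal is correct and follows essentially the same route as the paper: reduce via the Jordan form to the invertibility of $\Mat M$, factor it as a block-diagonal matrix times the second-kind generalized Vandermonde matrix $\widetilde{\Mat V}_{\Vek\lambda,\Vek m}$, and read off the conditions from \eqref{eq:general-vand-det:1}. Your anti-triangular matrices $\Mat T_j$ are exactly the upper-left Hankel matrices $\Mat H(\Vek\psi_j)$ appearing in the paper's proof, so the anticipated bookkeeping obstacle is precisely the computation the paper carries out.
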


\begin{proof}
  Assume that $\Mat A$ has the Jordan decomposition
  $\Mat A = \Mat S \Mat J \Mat S^{-1}$.  We represent the vector
  $\Vek \phi$ with respect to the column-wise basis in $\Mat S$
  according to the size of the Jordan blocks in $\Mat J$.  More
  precisely, we denote by $\Vek \psi_j$ the coordinates corresponding
  to the basis vectors in $\Mat S_j$.  The coefficients are thus given
  by
  \begin{equation*}
    \Vek \psi \coloneqq
    \begin{pmatrix}
      \Vek \psi_0 \\ \vdots \\ \Vek \psi_{M-1}
    \end{pmatrix}
    = \Mat S^{-1} \Vek \phi.
  \end{equation*}
	Next, we consider the generated vectors
	$\Vek \phi_\ell \coloneqq \Mat A^\ell \Vek \phi$ with
	$\ell = 0, \dots, d-1$.  On the basis of the Jordan canonical
        form,
	they are given by $\Vek \phi_\ell = \Mat S \Mat J^\ell \Vek \psi$.
	Considering only the $j$th Jordan block, we notice
        \begin{equation}
		\label{eq:J_power_ell}
		\Mat J_j^\ell \Vek \psi_j
		= \Mat H(\Vek \psi_j) \,
		\biggl( \binom \ell k \, \lambda_j^{\ell -
			k} \biggr)_{k = 0}^{m_j-1},
	\end{equation}
	where 
	\begin{equation*}
		\Mat H(\Vek \psi_j)
		= 
		\begin{bmatrix}
			(\Vek \psi_j)_0 &(\Vek \psi_j)_1 & \dots & (\Vek\psi_j)_{m_j-1} \\
                        \vdots &\vdots&\iddots&\vdots\\
			(\Vek\psi_j)_{m_j-2}&(\Vek\psi_j)_{m_j-1}&&\\
			(\Vek\psi_j)_{m_j-1} &0& \dots & 0
		\end{bmatrix},
	\end{equation*}
	is an upper-left Hankel matrix in $\BC^{m_j\times m_j}$. The vector on the right-hand side of \eqref{eq:J_power_ell} is here the $\ell$th column of
	$\Mat R_j$ within the definition of generalized Vandermonde matrix
	$\widetilde{\Mat V}_{\Vek \lambda, \Vek m}$.  The matrix of the
	generated vectors may hence be written as
	\begin{equation*}
		[\Vek \phi_0 | \dots | \Vek \phi_{d-1}]
		= \Mat S
		\begin{bmatrix}
			\Mat H(\Vek \psi_0) & \dots & \Mat 0 \\
			\vdots & \ddots & \vdots \\
			\Mat 0 & \dots & \Mat H(\Vek \psi_{m-1})
		\end{bmatrix}
		\widetilde{\Mat V}_{\Vek \lambda, \Vek m}.
	\end{equation*}
	This matrix is invertible if and only if the generalized Vandermonde
	matrix $\widetilde{\Mat V}_{\Vek \lambda, \Vek m}$ is invertible, \ie\
	if the eigenvalues are pairwise distinct, see
	\eqref{eq:general-vand-det:1}, and if the Hankel matrices
	$\Mat H(\Vek \psi_j)$ are regular, \ie\ if the coefficients
	$(\Vek \psi_j)_{m_j-1}$ of the highest-order generalized
	eigenvectors do not vanish.  \qed
\end{proof}

\begin{theorem}[Dynamical frame]
	Let $L \ge d$, and let $\Mat A \in \BC^{d \times d}$ be arbitrary.  Then
	$\{\Mat A^\ell \Vek \phi \}_{\ell = 0}^{L-1}$ is a frame if and only
	if  the eigenvalues of the Jordan blocks of $\Mat A$ are pairwise
	distinct and  $\Vek \phi$ depends on all Jordan generators.
\end{theorem}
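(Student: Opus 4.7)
The plan is to reduce this statement to Theorem~\ref{thm:dyn-basis} via the Cayley--Hamilton theorem. Since $\BC^d$ is finite-dimensional, a family of vectors is a frame for $\BC^d$ if and only if it spans $\BC^d$. Thus the claim is equivalent to showing that $\{\Mat A^\ell \Vek \phi\}_{\ell=0}^{L-1}$ spans $\BC^d$ precisely when the Jordan blocks of $\Mat A$ have pairwise distinct eigenvalues and $\Vek \phi$ depends on all Jordan generators.

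The key observation is that by the Cayley--Hamilton theorem, $\Mat A$ satisfies its own characteristic polynomial, so every power $\Mat A^\ell$ with $\ell \ge d$ is a linear combination of $\Mat I, \Mat A, \dots, \Mat A^{d-1}$. Applying this identity to $\Vek \phi$ yields, for any $L \ge d$,
\begin{equation*}
  \mathrm{span}\{\Mat A^\ell \Vek \phi\}_{\ell=0}^{L-1}
  = \mathrm{span}\{\Mat A^\ell \Vek \phi\}_{\ell=0}^{d-1}.
\end{equation*}
Consequently, $\{\Mat A^\ell \Vek \phi\}_{\ell=0}^{L-1}$ is a frame for $\BC^d$ if and only if the first $d$ iterates $\{\Mat A^\ell \Vek \phi\}_{\ell=0}^{d-1}$ span $\BC^d$, which in view of their cardinality is equivalent to them forming a basis.

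Invoking Theorem~\ref{thm:dyn-basis} then immediately characterizes the basis property by the two stated conditions on the spectrum of $\Mat A$ and on $\Vek \phi$, establishing both directions of the equivalence at once. There is essentially no obstacle here: the entire content of the result is carried by Theorem~\ref{thm:dyn-basis}, with Cayley--Hamilton providing the bridge from $L=d$ to arbitrary $L \ge d$ by showing that augmenting the iteration beyond the dimension neither enlarges nor shrinks the span.
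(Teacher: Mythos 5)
Your proof is correct, but it takes a different route from the paper's for the ``only if'' direction. The paper argues directly: if $\Vek \phi$ misses a Jordan generator, all iterates $\Mat A^\ell \Vek \phi$ miss it too; and if two Jordan blocks share an eigenvalue $\lambda_{j_0} = \lambda_{j_1}$, the projections of the iterates onto the two-dimensional subspace spanned by the corresponding leading generalized eigenvectors are all scalar multiples (by $\lambda^\ell$) of a single vector, so they span only a line and the family cannot be a frame no matter how large $L$ is. You instead invoke the Cayley--Hamilton theorem to show $\Span\{\Mat A^\ell \Vek\phi\}_{\ell=0}^{L-1} = \Span\{\Mat A^\ell \Vek\phi\}_{\ell=0}^{d-1}$ for every $L \ge d$, which collapses the whole statement (both directions at once) onto Theorem~\ref{thm:dyn-basis}, using that a finite family in $\BC^d$ is a frame precisely when it spans and that $d$ spanning vectors form a basis. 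Your reduction is clean, fully rigorous, and arguably more economical; what the paper's direct argument buys in exchange is an explicit picture of \emph{how} the frame property fails --- identifying the missed generalized eigendirections and the one-dimensional projection obstruction --- which is the structural insight reused elsewhere in the dynamical sampling literature. Either proof is acceptable.
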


\begin{proof}
  If the vector $\Vek \phi$ is independent of one Jordan generator,
  then the images $\Mat A^\ell \Vek \phi$ are also independent of this
  generator; so $\{\Mat A^\ell \Vek \phi \}_{\ell = 0}^{L-1}$ can not
  be a frame for $\BC^d$.  Now assume that some eigenvalues of
  $\Mat A$ coincide, \ie\ the Jordan block to this eigenvalue
  decompose into several smaller Jordan blocks.
  Assume that the eigenvalues $\lambda_{j_0}$ and $\lambda_{j_1}$ coincide,
  and that the corresponding Jordan blocks have dimension
  $m_{j_0} \times m_{j_0}$ and $m_{j_1} \times m_{j_1}$. 
  Using the notation in the proof of Theorem~\ref{thm:dyn-basis},
    the coordinates of $\Vek \phi$ in
  $E \coloneqq \Span\{\Vek s_{j_0,m_{j_0-1}}, \Vek s_{j_1,m_{j_1-1}}\}$
  are $(\Vek \psi_{j_0})_{m_{j_0-1}}$ and $(\Vek \psi_{j_1})_{m_{j_1-1}}$.
  Applying $\Mat A^\ell$ to $\Vek \phi$, we get the coordinates
  $\lambda_{j_0}^\ell (\Vek \psi_{j_0})_{m_{j_0-1}}$ and
  $\lambda_{j_1}^\ell (\Vek \psi_{j_1})_{m_{j_1-1}}$ with
  $\lambda_{j_0} = \lambda_{j_1}$ regarding the subspace $E$.  Thus
  the projections $\proj_E (\{\Mat A^\ell \Vek \phi\}_{\ell =0}^{L-1})$ only span a
  one-dimensional subspace.  As a consequence
  $\{\Mat A^\ell \Vek \phi \}_{\ell =0}^{L-1}$ cannot span $\BC^d$,
  and we cannot obtain a frame.  The opposite direction has already be
  proven with Theorem~\ref{thm:dyn-basis}.  \qed
\end{proof}

Since  generic matrices $\Mat A \in \BC^{d \times d}$ are
diagonalizable with pairwise distinct eigenvalues, for almost all
matrices holds the following special case.

\begin{corollary}[Dynamical frame]\label{Cor:dynamical frame}
	Let $L \ge d$, and let $\Mat A \in \BC^{d \times d}$ be diagonalizable.  Then
	$\{\Mat A^\ell \Vek \phi \}_{\ell=0}^{L-1}$ is a frame if and only
	if the eigenvalues of $\Mat A$ are pairwise distinct and 
	$\Vek \phi$ depends on all eigenvectors.
\end{corollary}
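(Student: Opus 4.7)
The plan is to derive this corollary directly from the preceding Dynamical Frame Theorem by specialising to the diagonalisable case; the whole task reduces to matching hypotheses.

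First, I would observe that a matrix $\Mat A \in \BC^{d \times d}$ is diagonalisable precisely when every Jordan block in its Jordan canonical form has size $1 \times 1$, that is $m_j = 1$ for all $j$ and $M = d$. Each Jordan block $\Mat J_j$ then reduces to the scalar $\lambda_j$, and every Jordan chain $\Mat S_j = [\Vek s_{j,0}]$ consists of a single eigenvector. Hence ``the eigenvalues of the Jordan blocks of $\Mat A$ are pairwise distinct'' is verbatim the assertion that the $d$ eigenvalues of $\Mat A$ are pairwise distinct, and a ``Jordan generator'' is nothing but an ordinary eigenvector of $\Mat A$.

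Second, I would translate the dependence condition. In the statement of the general theorem, $\Vek\phi$ depends on the Jordan generator $\Vek s_{j,m_j-1}$ whenever the corresponding entry of $\Vek\psi = \Mat S^{-1}\Vek\phi$ is nonzero. When $m_j = 1$, this entry is simply the coefficient of $\Vek\phi$ in the eigenbasis along the eigenvector $\Vek s_{j,0}$. Requiring this for every $j$ is exactly the statement that $\Vek\phi$ depends on all eigenvectors, as in the corollary.

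With both hypotheses matched, the claim is precisely the Dynamical Frame Theorem applied to a diagonalisable $\Mat A$, so nothing further needs to be argued. There is no serious obstacle here; the only thing worth doing for the reader's convenience is to note (as already observed in the preliminaries) that with $\Vek m = (1,\dots,1)^\T$ the second-kind generalised Vandermonde matrix $\widetilde{\Mat V}_{\Vek\lambda,\Vek m}$ collapses to the classical $\Mat V_{\Vek\lambda}$ and the Hankel blocks $\Mat H(\Vek\psi_j)$ collapse to the scalars $(\Vek\psi_j)_0$, making the reduction of the proof of Theorem~\ref{thm:dyn-basis} to this setting completely transparent.
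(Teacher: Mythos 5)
Your proposal is correct and follows essentially the same route as the paper: the paper likewise obtains the corollary by specialising the Jordan-form argument to the diagonalisable case, where the factorisation collapses to $\Mat S \, \diag(\Vek\psi)\, \Mat V_{\Vek\lambda}$ with the classical Vandermonde matrix, exactly as in your closing remark. Matching the hypotheses (Jordan blocks of size one, generators becoming eigenvectors) is all that is needed, so nothing is missing.
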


\begin{proof}
	Since the Jordan blocks here reduces to size $1\times1$, the matrix
	of the generated vectors in the proof of  Theorem~\ref{thm:dyn-basis}
	simplifies to
	\begin{equation*}
		[\Vek \phi | \Mat A \Vek \phi | \dots | \Mat A^{d-1} \Vek \phi ]
		= \Mat S \diag(\Vek \psi) \, \Mat V_{\Vek \lambda}.
	\end{equation*}
	This matrix is invertible if and only if the classical Vandermonde
	matrix $\Mat V_{\Vek \lambda}\in \BC^{d\times d}$ is invertible  and none of the coordinates of $\Vek \psi$ 
	vanishes. \qed
\end{proof}

For $\Vek a \in \BC^d$, let $\Circ (\Vek a)$ denote the circulant matrix whose first column is given by the vector $\Vek a$. All circulant  matrices  are  diagonalizable with respect to the discrete Fourier transform, i.e., 
\[
\Circ(\Vek a) = \tfrac1{d} \, {\Mat F} \, \diag(\hat{\Vek a}) \,{ \Mat F}^{-1},\quad 
\]
where  $\hat{\Vek a} =\Mat F\Vek a $ is given via the Fourier matrix $\Mat F
=(\e^{-\frac{2\pi \I j k}{d}})_{j,k=0}^{d-1}$.

\begin{corollary}[Repeated convolution]
	Let $L \ge d$, and let $\Vek \phi, \Vek a \in \BC^d$ be arbitrary.  Then the family
	\begin{equation*}
		\bigl\{ \underbracket{\Vek a * \cdots * \Vek a}_{\ell \,
			\text{times}}  * \, \Vek \phi \bigr\}_{\ell=0}^{L-1}
	\end{equation*}
	is a frame for $\BC^d$ if and only if the coordinates of $\hat{\Vek \phi}$ do not
	vanish and the coordinates of $\hat{\Vek a}$ are pairwise
	distinct.
\end{corollary}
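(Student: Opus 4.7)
The plan is to reduce the statement to the diagonalizable case of Corollary~\ref{Cor:dynamical frame} by recognizing that $\ell$-fold convolution with $\Vek a$ is nothing but applying the $\ell$th power of the circulant matrix $\Circ(\Vek a)$.

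First I would observe that $\Vek a * \Vek \phi = \Circ(\Vek a) \Vek \phi$ and, by associativity of convolution, that the family can be rewritten as $\{\Circ(\Vek a)^\ell \Vek \phi\}_{\ell=0}^{L-1}$. The diagonalization stated just before the corollary,
\[
  \Circ(\Vek a) = \tfrac{1}{d}\, \Mat F \, \diag(\hat{\Vek a})\, \Mat F^{-1},
\]
then makes $\Circ(\Vek a)$ a diagonalizable matrix whose eigenvalues are precisely the coordinates of $\hat{\Vek a}$ and whose eigenvectors are the columns of the Fourier matrix $\Mat F$. With this at hand, Corollary~\ref{Cor:dynamical frame} applies and gives that $\{\Circ(\Vek a)^\ell \Vek \phi\}_{\ell=0}^{L-1}$ is a frame for $\BC^d$ if and only if the coordinates of $\hat{\Vek a}$ are pairwise distinct and $\Vek \phi$ depends on all Fourier eigenvectors, i.e.\ all coordinates of $\Mat F^{-1} \Vek \phi$ are non-zero.

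The only remaining point is to translate the condition ``$\Vek\phi$ depends on every eigenvector'' into the statement about $\hat{\Vek \phi}$ claimed in the corollary. Since $\Mat F^{-1} = \tfrac{1}{d}\overline{\Mat F}$, a short computation shows that $(\Mat F^{-1}\Vek\phi)_k = \tfrac{1}{d}\, \overline{\hat{\Vek \phi}}_{-k \bmod d}$ (or one can directly use that $\Mat F$ is symmetric and note that the map $\hat{\Vek\phi}\mapsto\Mat F^{-1}\Vek\phi$ is a permutation followed by scaling and conjugation). In particular, all coordinates of $\Mat F^{-1}\Vek\phi$ are non-zero if and only if all coordinates of $\hat{\Vek\phi}$ are non-zero, which finishes the equivalence.

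The argument is essentially routine once the two preparatory facts, namely that convolution equals multiplication by a circulant matrix and that circulants are Fourier-diagonalized, are in place. The only mild subtlety I expect is the bookkeeping around normalization and conjugation of $\Mat F$ when identifying the eigenbasis coefficients with the Fourier transform $\hat{\Vek \phi}$; this is where a careful computation is required to match the two non-vanishing conditions exactly.
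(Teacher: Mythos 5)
Your proposal is correct and follows essentially the same route as the paper: identify the family with $\{\Circ(\Vek a)^\ell\Vek\phi\}_{\ell=0}^{L-1}$, use the Fourier diagonalization of the circulant, and invoke Corollary~\ref{Cor:dynamical frame}; the paper's own proof is just a terser version that leaves the translation between ``depends on all eigenvectors'' and the non-vanishing of $\hat{\Vek\phi}$ implicit, which you spell out. (One negligible slip: $(\Mat F^{-1}\Vek\phi)_k = \tfrac1d\,\hat{\Vek\phi}_{-k\bmod d}$ without the complex conjugate, but this does not affect the non-vanishing equivalence.)
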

\begin{proof}
	Note that  $ \Vek a  * \, \Vek \phi = \Circ (\Vek a) \Vek \phi$  and $\Vek A \coloneqq \Circ(\Vek a)$ is a diagonalizable matrix that by hypothesis has pairwise distinct eigenvalues $\{ \hat{a}_k\}_{k=0}^{d-1}$. The result follows now from Corollary \ref{Cor:dynamical frame}.\qed 
      \end{proof}

\section{Full-Spark Dynamical Frames}
\label{sec: Full_spark_dynamical_frames}

A frame $\{ \Vek f_k\}_{k=0}^{L-1}$ has full spark if
every subset embracing $d$ elements spans
$\BC^d$. 
This property makes full-spark frames attractive in phase retrieval and
more generally in signal processing
\cite{alexeev2014phase,malikiosis2020full,balan2006signal,alexeev2012full}.
In the following, we study conditions ensuring that
frames generated via diagonalizable matrices have full spark. We show
that a dynamical frame has full spark precisely when the Vandermonde matrix
$\Mat V_{\Vek\lambda}$ related to the eigenvalues of $\Mat A$
has full spark.
\begin{theorem}
	\label{thm:dyn-full-spark-frame}
	Let $\Mat A \in \BC^{d \times d}$ be diagonalizable with eigenvalues
        $\Vek \lambda$. For every
        $L \ge d$, the set $\{\Mat A^\ell \Vek \phi\}_{\ell=0}^{L-1}$
        is a full spark frame if and only if $\Vek \phi$ depends on
        all eigenvectors and 
        $\Mat V_{\Vek \lambda}\in \BC^{d\times L}$
        has full spark.  
\end{theorem}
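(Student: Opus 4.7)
My plan is to extend the factorisation from the proof of Corollary~\ref{Cor:dynamical frame} from $L=d$ to arbitrary $L \ge d$. Writing the diagonalisation $\Mat A = \Mat S \diag(\Vek\lambda) \Mat S^{-1}$ and setting $\Vek\psi \coloneqq \Mat S^{-1}\Vek\phi$, the same computation that appears there yields
\begin{equation*}
  [\Vek\phi \mid \Mat A\Vek\phi \mid \cdots \mid \Mat A^{L-1}\Vek\phi]
  = \Mat S \, \diag(\Vek\psi) \, \Mat V_{\Vek\lambda},
\end{equation*}
where now $\Mat V_{\Vek\lambda} \in \BC^{d\times L}$. The whole argument rests on the fact that selecting a column set $I \subset \{0,\dots,L-1\}$ of size $d$ on the left-hand side selects exactly the same columns of the Vandermonde matrix on the right, i.e.
\begin{equation*}
  [\Mat A^\ell \Vek\phi]_{\ell \in I}
  = \Mat S \, \diag(\Vek\psi) \, [\Mat V_{\Vek\lambda}]_I.
\end{equation*}

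For the \emph{if} direction, the hypothesis that $\Vek\phi$ depends on all eigenvectors means that $\Vek\psi$ has no vanishing coordinate, so $\Mat S \diag(\Vek\psi)$ is invertible. Combined with the full spark of $\Mat V_{\Vek\lambda}$ — every $[\Mat V_{\Vek\lambda}]_I$ is invertible — the above identity immediately shows that each $d$-subset of $\{\Mat A^\ell \Vek\phi\}_{\ell=0}^{L-1}$ is linearly independent.

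For the \emph{only if} direction, I first specialise the full-spark assumption to $I = \{0,\dots,d-1\}$ and obtain via Theorem~\ref{thm:dyn-basis} that the eigenvalues of $\Mat A$ are pairwise distinct and that $\Vek\phi$ depends on all eigenvectors; consequently $\Mat S \diag(\Vek\psi)$ is invertible. Multiplying each invertible matrix $[\Mat A^\ell\Vek\phi]_{\ell \in I}$ by $(\Mat S \diag(\Vek\psi))^{-1}$ then transfers invertibility to every $[\Mat V_{\Vek\lambda}]_I$, yielding full spark of $\Mat V_{\Vek\lambda}$. There is no serious obstacle in this argument; the only point worth keeping in mind is that full spark of a rectangular Vandermonde matrix is strictly stronger than nonvanishing of its leading $d\times d$ minor, so both conditions on $\Vek\phi$ and on $\Vek\lambda$ are genuinely needed and neither is automatic from the other.
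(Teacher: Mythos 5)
Your proof is correct and follows essentially the same route as the paper: both diagonalize $\Mat A$, factor each selected column set as $\Mat S\,\diag(\Vek\psi)\,[\Mat V_{\Vek\lambda}]_I$ (the paper writes this as the generalized Vandermonde matrix $\Mat V_{\Vek\lambda,\Vek m}$ of the first kind), and read off invertibility from the two factors. Your extra care in the only-if direction (routing through Theorem~\ref{thm:dyn-basis} to secure invertibility of $\Mat S\,\diag(\Vek\psi)$ before transferring invertibility to the Vandermonde submatrices) is a harmless elaboration of what the paper states more tersely.
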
 

\begin{proof}
	Assume that $\Mat A$ has the eigenvalue decomposition
	$\Mat A = \Mat S \Mat J \Mat S^{-1}$, where $\Mat J$ is a diagonal matrix, and denote the
	coordinates of $\Vek \phi$ with respect to $\Mat S$ by
	$\Vek \psi \coloneqq \Mat S^{-1} \Vek \phi$.  Consider an arbitrary
	subset $\{\Mat A^{m_\ell} \Vek \phi\}_{\ell=0}^{d-1}$ of
	$\{\Mat A^\ell \Vek \phi\}_{\ell=0}^{L-1}$ with
	$\Vek m=(m_0,\dots,m_{d-1})^T$. Then
	the matrix
	\begin{equation*}
		[\Mat A^{m_0} \Vek \phi | \Mat A^{m_1} \Vek \phi | \dots | \Mat
		A^{m_{d-1}} \Vek \phi ] 
		= \Mat S \diag(\Vek \psi) \,
		\Mat V_{\Vek \lambda, \Vek m}.
	\end{equation*}
	is invertible if and only if all elements of $\Vek \psi$ are
	non-zero and if $V_{\Vek \lambda, \Vek m}$ is invertible, which
	means that $\Mat V_{\Vek \lambda}$ has full spark. \qed
\end{proof}

The following result specializes Theorem
\ref{thm:dyn-full-spark-frame} for $\Mat A$ with eigenvalues $\lambda_k=\lambda^k$.

\begin{corollary}\label{thm:full-spark-diag}
  Let $\Mat A \in \BC^{d \times d}$ be diagonalizable with eigenvalues
  $\Vek \lambda = (\lambda^k)_{k=0}^{d-1}$ with $\lambda^k \ne
  1$ for some $\lambda \in \BC$. For every
  $L \ge d$, the set $\{\Mat A^\ell \Vek \phi\}_{\ell=0}^{L-1}$
  is a full spark frame if and only if $\Vek \phi$ depends on
  all eigenvectors.
\end{corollary}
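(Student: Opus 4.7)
The plan is to invoke Theorem~\ref{thm:dyn-full-spark-frame}, so the whole task reduces to analysing when the classical Vandermonde matrix $\Mat V_{\Vek\lambda}\in\BC^{d\times L}$ with $\Vek\lambda = (1,\lambda,\lambda^{2},\dots,\lambda^{d-1})^{\T}$ has full spark. The dependence of $\Vek\phi$ on all eigenvectors is taken over verbatim from that theorem, so I only need to verify that the Vandermonde condition is automatically satisfied under the hypothesis on $\lambda$ (namely $\lambda^{k}\ne 1$ for $k=1,\dots,L-1$, which is what the statement effectively requires for distinctness inside any $d$-subset of columns).

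The key observation I would use is a Vandermonde-in-Vandermonde structure: the $\ell$th column of $\Mat V_{\Vek\lambda}$ reads
\begin{equation*}
  (\lambda_k^{\ell})_{k=0}^{d-1}=(\lambda^{k\ell})_{k=0}^{d-1}=(\mu_\ell^{k})_{k=0}^{d-1},
  \qquad \mu_\ell\coloneqq\lambda^{\ell}.
\end{equation*}
Thus, selecting any indices $0\le m_0<m_1<\dots<m_{d-1}\le L-1$ and setting $\tilde{\Vek\mu}\coloneqq(\lambda^{m_0},\dots,\lambda^{m_{d-1}})^{\T}$, the corresponding $d\times d$ submatrix of $\Mat V_{\Vek\lambda}$ equals the transpose of the classical Vandermonde matrix $\Mat V_{\tilde{\Vek\mu}}$, whose determinant factorises as
\begin{equation*}
  \prod_{0\le i<j\le d-1}(\lambda^{m_j}-\lambda^{m_i}).
\end{equation*}

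The next step is to translate non-vanishing of this determinant into a condition on $\lambda$. Each factor vanishes exactly when $\lambda^{m_j-m_i}=1$, and as $(m_i,m_j)$ runs over all admissible pairs drawn from $\{0,\dots,L-1\}$ the differences $m_j-m_i$ range through every integer in $\{1,\dots,L-1\}$. Hence $\Mat V_{\Vek\lambda}$ has full spark if and only if $\lambda^{k}\ne 1$ for every $k=1,\dots,L-1$, which is precisely the standing assumption of the corollary. In particular this already implies that $\Vek\lambda$ has pairwise distinct entries, so diagonalisability gives a genuine eigenbasis and Theorem~\ref{thm:dyn-full-spark-frame} applies.

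There is no real obstacle here: once the Vandermonde-in-Vandermonde identity above is written down, everything else is a direct computation combined with the Vandermonde determinant formula. The only point that deserves care is the exact range of differences $m_j-m_i$ produced by varying the $d$-subset inside $\{0,\dots,L-1\}$, but this is an elementary combinatorial observation. Plugging the full-spark equivalence back into Theorem~\ref{thm:dyn-full-spark-frame} then yields the claimed characterisation and completes the proof.
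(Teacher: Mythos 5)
Your proof is correct and is essentially the paper's own argument written out in full: the paper's one-line proof (``for the chosen $\lambda$, every $d\times d$ sub-matrix of $\Mat V_{\Vek\lambda}$ is an invertible Vandermonde matrix'') is exactly your Vandermonde-in-Vandermonde observation with nodes $\lambda^{m_0},\dots,\lambda^{m_{d-1}}$, followed by the standard determinant factorisation. The only caveat --- shared with the paper's statement --- is that the equivalence ``$\lambda^{m_j}-\lambda^{m_i}=0$ iff $\lambda^{m_j-m_i}=1$'' uses $\lambda^{m_j}-\lambda^{m_i}=\lambda^{m_i}(\lambda^{m_j-m_i}-1)$ and hence tacitly assumes $\lambda\ne 0$, a degenerate case the hypothesis $\lambda^k\ne 1$ does not literally exclude.
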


\begin{proof}
	For the chosen $\lambda$, every $d\times d$ sub-matrix of 
	$\Mat V_{\Vek\lambda}$  is an invertible
	Vandermonde matrix.  \qed
\end{proof}

\begin{example}
  Let $L\geq d$ and $\lambda = \e^{2\pi \I/L}$ be the $L$th unit
  root. Consider the matrix $\Mat A = \diag(\lambda^0,\dots,\lambda^{d-1})$ and
  $\Vek{\phi}=\Vek 1$. Then the set
  $\{\Mat A^\ell \Vek \phi \}_{\ell=0}^{L-1}$ is a frame for $\BC^d$
  and has full spark by Corollary~\ref{Cor:dynamical frame} and
  Corollary~\ref{thm:full-spark-diag}.  This frame is called harmonic and
  is related to a submatrix of the discrete Fourier matrix. In
  general not every submatrix of the discrete Fourier transform matrix
  forms a full-spark frame. For more information we refer to
  \cite{alexeev2012full}. 
\end{example}

\begin{theorem}
	Let $L \ge d$, and let $\Mat A\in \BC^{d\times d}$ be
        diagonalizable with distinct real and
	non-negative eigenvalues.  Then
	$\{\Mat A^\ell \Vek \phi \}_{\ell=0}^{L-1}$ is a full-spark frame if
	$\Vek \phi$ depends on all eigenspaces.
\end{theorem}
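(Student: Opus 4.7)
The plan is to reduce the claim to a question about a classical Vandermonde matrix via Theorem~\ref{thm:dyn-full-spark-frame}, and then handle the resulting determinants with the factorization~\eqref{eq:gen-vand-det}. Since $\Mat A$ is diagonalizable with pairwise distinct eigenvalues and $\Vek\phi$ depends on every eigenspace, Theorem~\ref{thm:dyn-full-spark-frame} applies, so $\{\Mat A^\ell\Vek\phi\}_{\ell=0}^{L-1}$ is a full-spark frame if and only if $\Mat V_{\Vek\lambda}\in\BC^{d\times L}$ has full spark. Equivalently, I must show that for every strictly increasing selection $\Vek m=(m_0,\dots,m_{d-1})^{\T}\in\BN_0^{d}$ of $d$ columns from $\{0,\dots,L-1\}$, the generalized Vandermonde submatrix $\Mat V_{\Vek\lambda,\Vek m}$ is invertible.

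For this invertibility, I would apply the determinant formula \eqref{eq:gen-vand-det}:
\[
\det\Mat V_{\Vek\lambda,\Vek m}
=\Bigl(\prod_{k>j}(\lambda_{k}-\lambda_{j})\Bigr)\,S(\Vek\lambda),
\]
where $S$ is a Schur polynomial. The Vandermonde prefactor is non-zero because the eigenvalues are pairwise distinct, so the whole question collapses to showing $S(\Vek\lambda)\neq 0$.

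This is where the positivity of the spectrum enters. The key observation is that $S$ is a non-zero polynomial whose monomial expansion has non-negative integer coefficients. Evaluated at a point $\Vek\lambda$ with strictly positive real coordinates, every surviving monomial contributes a positive real value, so the finite sum is strictly positive and $S(\Vek\lambda)>0$. Consequently $\det\Mat V_{\Vek\lambda,\Vek m}\neq 0$ for every admissible $\Vek m$, which means $\Mat V_{\Vek\lambda}$ has full spark; invoking Theorem~\ref{thm:dyn-full-spark-frame} one more time then yields the claim.

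The main obstacle is really just the non-vanishing of the Schur factor, and it is clean once one recognizes that non-negative integer coefficients together with strictly positive arguments preclude cancellation. (One small caveat: if a zero eigenvalue were admitted, the argument would break down and the statement would in fact fail, since for any $\Vek m$ with $m_{0}\geq 1$ the corresponding row of $\Mat V_{\Vek\lambda,\Vek m}$ would vanish identically; the hypothesis is therefore to be read as strict positivity, which is exactly the regime in which the above positivity argument succeeds.)
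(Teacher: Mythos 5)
Your proof is correct and follows essentially the same route as the paper: reduce via Theorem~\ref{thm:dyn-full-spark-frame} to the invertibility of every square generalized Vandermonde submatrix $\Mat V_{\Vek\lambda,\Vek m}$, and then use the factorization \eqref{eq:gen-vand-det} together with the non-negativity of the Schur-function coefficients to conclude that the determinant cannot vanish on a positive spectrum. Your closing caveat is actually sharper than the paper's own argument, which asserts positivity of the determinant for merely \emph{non-negative}, distinct coordinates: for $L>d$ a zero eigenvalue does break full spark (any selection $\Vek m$ with $m_0\ge 1$ produces an identically zero row, and correspondingly the Schur factor vanishes), so strict positivity is the hypothesis under which the argument --- and the statement --- genuinely holds.
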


\begin{proof}
  Due to Theorem~\ref{thm:dyn-full-spark-frame}, the set
  $\{\Mat A^\ell \Vek \phi \}_{\ell=0}^{L-1}$ has full spark if and
  only if the generalized Vandermonde matrices
  $\Mat V_{\Vek \lambda, \Vek m}$ are invertible for every
  $\Vek m \in \BN_0^d$ with distinct coordinates.  Since the Schur
  functions in \eqref{eq:gen-vand-det} have only non-negative
  coefficients, the generalized Vandermonde determinant is here positive
  for all $\Vek \lambda$ with non-negative, distinct coordinates,
  which establishes the assertion.  \qed
\end{proof}

\section{Phase Retrieval in Dynamical Sampling}
\label{sec:phase-retr-dynam}

As mentioned in the introduction, the complementary
property can be exploited to ensure phases retrieval for real signals \cite{aldroubi2020phaseless,aldroubi2017phase}
Since this approach fails in the complex setting, we align $\Vek \phi$
with further sampling vectors allowing polarization.  This allow us to
recover the frame coefficient
$\langle \Vek x ,\Mat A^\ell \Vek \phi\rangle $ up to global phase and then
using the frame property we can reconstruct $\Vek x$.

\begin{theorem}\label{thm: 3107a}
	Let $\{\Mat A^\ell \Vek \phi\}_{\ell=0}^{L-1}$ be a frame for
	$\BC^d$, and let $\alpha_1, \alpha_2 \in \BR$ be real numbers with
	$\alpha_1 - \alpha_2 \not\in \pi\BZ$.  Then almost all
	$\Vek x \in \BC^d$ can be recovered from
	\begin{equation*}
		\bigl\{ |\langle \Vek x, \Mat A^\ell \Vek \phi \rangle|
		\bigr\}_{\ell=0}^{L-1} \cup \bigl\{ | \langle \Vek x,
		\Mat A^\ell (\Vek \phi 
		+\e^{\I \alpha_k} \Mat A \Vek \phi) \rangle | \bigr\}_{\ell=0,k=1}^{L-2,2}
	\end{equation*}
	up to global phase.
\end{theorem}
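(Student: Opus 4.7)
The plan is to recover the frame coefficients $c_\ell \coloneqq \langle \Vek x, \Mat A^\ell \Vek \phi\rangle$ up to a common global phase, and then invert via the dual-frame reconstruction \eqref{def: reconstruction_formula}. First, I would rewrite the aligned measurements: since $\Mat A^\ell(\Vek \phi + \e^{\I \alpha_k}\Mat A\Vek \phi) = \Mat A^\ell \Vek \phi + \e^{\I \alpha_k}\Mat A^{\ell+1}\Vek \phi$, the $(\ell,k)$-aligned measurement is precisely $\absn{c_\ell + \e^{\I \alpha_k} c_{\ell+1}}$. The available data thus consist of the moduli $\absn{c_\ell}$ for $\ell = 0,\ldots,L-1$ together with $\absn{c_\ell + \e^{\I \alpha_k} c_{\ell+1}}$ for $\ell = 0,\ldots,L-2$ and $k \in \{1,2\}$.

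Next, I fix $\Vek x$ such that $c_\ell \ne 0$ for every $\ell$, and for each $\ell = 0,\ldots,L-2$ I apply Theorem~\ref{thm:polarization} to $(z_1,z_2) = (c_\ell, c_{\ell+1})$ with the two alignments $\alpha_1,\alpha_2$, whose difference avoids $\pi\BZ$ by hypothesis. This returns $\bar c_\ell c_{\ell+1}$ and thereby the relative phase $\phi_{\ell+1}-\phi_\ell$ in the polar form $c_\ell = \absn{c_\ell}\e^{\I \phi_\ell}$. Telescoping over $\ell$ determines every $c_\ell$ up to a single unknown global phase $\e^{\I \phi_0}$. Substituting these recovered coefficients into \eqref{def: reconstruction_formula} with the canonical dual frame then yields $\Vek x$ up to the same global phase, which is exactly the conclusion of the theorem.

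Finally, the exceptional set where the above argument breaks is contained in
\[
E \coloneqq \bigcup_{\ell=0}^{L-1}\{\Vek x \in \BC^d : \langle \Vek x, \Mat A^\ell \Vek \phi\rangle = 0\}.
\]
The frame hypothesis together with Theorem~\ref{thm:dyn-basis} ensures each $\Mat A^\ell \Vek \phi \ne 0$, so every summand is a proper complex hyperplane and hence Lebesgue-null in $\BC^d$; a finite union of such sets is still null, establishing the "almost every" statement. The main obstacle of the argument is precisely this zero-coefficient issue: polarization cannot bridge a vanishing $c_\ell$ in the phase chain, so recovery is intrinsically confined to the dense open set $\BC^d \setminus E$, which is the reason the conclusion reads "almost all" rather than "all". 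Closing this gap requires additional rigidity on the frame (such as full spark), as is exploited in the subsequent strengthening.
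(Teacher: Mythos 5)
Your proof is correct and follows essentially the same route as the paper: rewrite the aligned measurements as $\absn{c_\ell + \e^{\I\alpha_k} c_{\ell+1}}$, apply Theorem~\ref{thm:polarization} to extract the relative phases, telescope to fix all coefficients up to one global phase, and reconstruct via the dual frame. Your explicit identification of the exceptional set as a finite union of proper hyperplanes (hence Lebesgue-null) is a welcome elaboration of the paper's brief appeal to a ``dense set,'' but it is not a different argument.
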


\begin{proof}
  We consider the dense set of  $\Vek x\in \BC^d$ for which $\langle \Vek x , \Mat A^\ell \Vek \phi \rangle \ne 0$
  for $\ell = 0 , \dots, L-1$. Using the polarization in
  Theorem~\ref{thm:polarization}, we determine the products
  \begin{equation*}
    \overline{\langle \Vek x , \Mat A^\ell \Vek \phi \rangle }
    \langle \Vek x , \Mat A^{\ell+1} \Vek \phi \rangle
    \qquad (\ell = 0,\dots, L-2).
  \end{equation*}
  Considering the phase of the above identity, we calculate the
  relative phases
  \begin{equation*}
    \arg \, \iProdn{\Vek x}{\Mat A^{\ell + 1} \Vek \phi} - \arg \,
    \iProdn{\Vek x}{\Mat A^\ell \Vek \phi}
    \mod 2\pi
    \qquad
    (\ell = 0, \dots, L-2).
  \end{equation*}
  Choosing the phase of $\iProdn{\Vek x}{\Vek \phi}$ arbitrary,  we
  may thus recover the frame coefficients $\iProdn{\Vek x}{\Mat A^\ell
    \Vek \phi}$ up to global phase and thus $\Vek x$.    \qed
\end{proof}

\begin{corollary}
	If $\{\Mat A^\ell \Vek \phi\}_{\ell=0}^{L-1}$ is a frame for
	$\BR^d$, and $\alpha\in \{-1,1\}$, then almost every 
	$\Vek x \in \BR^d$ can be recovered from
	\begin{equation*}
		\bigl\{ |\langle \Vek x, \Mat A^\ell \Vek \phi \rangle|
		\bigr\}_{\ell=0}^{L-1} \cup \bigl\{ | \langle \Vek x,
		\Mat A^\ell (\Vek \phi 
		+\alpha \Mat A \Vek \phi) \rangle | \bigr\}_{\ell=0}^{L-2}
	\end{equation*}
	up to sign.
\end{corollary}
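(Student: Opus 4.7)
The plan is to mimic the proof of Theorem~\ref{thm: 3107a}, replacing the complex polarization by its real analogue stated in the Remark on real polarization. Because $\{\Mat A^\ell \Vek \phi\}_{\ell=0}^{L-1}$ is a frame for $\BR^d$, each vector $\Mat A^\ell \Vek \phi$ is non-zero, so the set
\begin{equation*}
  N \coloneqq \bigcup_{\ell=0}^{L-1} \{\Vek x \in \BR^d : \langle \Vek x, \Mat A^\ell \Vek \phi \rangle = 0\}
\end{equation*}
is a finite union of hyperplanes, hence has Lebesgue measure zero. I would therefore restrict attention to the dense open set $\BR^d \setminus N$ on which every frame coefficient $\langle \Vek x, \Mat A^\ell \Vek \phi\rangle$ is non-zero.

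Next, for each $\ell = 0, \dots, L-2$, I would apply the real polarization remark to the pair
\begin{equation*}
  z_1 \coloneqq \langle \Vek x, \Mat A^\ell \Vek \phi\rangle, \qquad
  z_2 \coloneqq \langle \Vek x, \Mat A^{\ell+1} \Vek \phi\rangle,
\end{equation*}
using the given measurements $|z_1|$, $|z_2|$, and
\begin{equation*}
  |z_1 + \alpha z_2|
  = \bigl| \langle \Vek x, \Mat A^\ell (\Vek \phi + \alpha \Mat A \Vek \phi)\rangle \bigr|.
\end{equation*}
Since $|z_1+\alpha z_2|^2 = z_1^2 + z_2^2 + 2\alpha z_1 z_2$, the product $z_1 z_2$ is uniquely determined. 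In particular its sign, which equals the relative sign of the consecutive frame coefficients, is known.

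Fixing the sign of $\langle \Vek x, \Vek \phi\rangle$ arbitrarily (this is precisely the unavoidable global sign ambiguity), the recovered relative signs then determine every $\langle \Vek x, \Mat A^\ell \Vek \phi\rangle$ up to an overall factor $\pm 1$. Since $\{\Mat A^\ell \Vek \phi\}_{\ell=0}^{L-1}$ is a frame, the reconstruction formula~\eqref{def: reconstruction_formula} with the canonical dual frame recovers $\Vek x$ itself up to sign in a stable manner.

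The only subtle point is the handling of the exceptional set $N$, which I expect to be the main (though routine) obstacle: on $N$ the real polarization fails because one of $z_1, z_2$ vanishes, and the sign of the vanishing coefficient cannot be propagated through the chain of relative signs. Since $N$ has measure zero, the statement holds for almost every $\Vek x \in \BR^d$, which is exactly what the corollary claims.
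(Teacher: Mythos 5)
Your proof is correct and is exactly the argument the paper intends: the corollary is stated without its own proof as the real analogue of Theorem~\ref{thm: 3107a}, and you reproduce that proof verbatim with the real polarization remark in place of Theorem~\ref{thm:polarization}, propagating relative signs along consecutive coefficients and inverting the frame. The only cosmetic caveat is that a frame element $\Mat A^\ell \Vek\phi$ could in principle vanish (e.g.\ for nilpotent $\Mat A$ with $L>d$), in which case your exceptional set $N$ is not a union of proper hyperplanes; but any such zeros necessarily occur at the tail of the orbit, so they can be discarded without breaking the chain of consecutive polarizations, and the paper's proof of Theorem~\ref{thm: 3107a} makes the same implicit assumption.
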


Although the extended measurement set allows the extraction of
relative phases, the proposed procedure may fail in rare cases, where
some of the coefficients $|\langle x , \Mat A^\ell\Vek\phi \rangle |$  are zero for some $\ell$ which means that we are not able to recover $\Vek
x$ if it lies in the union of finitely many hyperplanes. On the contrary,
if the generated frame has full-spark, one do not need all of the
coefficients to recover the wanted signal.

\begin{theorem}
	\label{thm:min-frame-length}
	Let $\{\Mat A^\ell \Vek \phi\}_{\ell=0}^{L-1}$ be a full-spark frame
	for $\BC^d$, and let $\alpha_1, \alpha_2 \in \BR$ be real numbers
	with $\alpha_1 - \alpha_2 \not\in \pi \BZ$.  If
	$L \ge \nicefrac{d^2}{4} + \nicefrac d2$, then every $\Vek x\in\BC^d$
	can be recovered from the samples
	\begin{equation*}
		\bigl\{ |\langle \Vek x, \Mat A^\ell \Vek \phi \rangle|
		\bigr\}_{\ell=0}^{L-1}
		\cup \bigl\{ | \langle \Vek x, \Mat A^\ell (\Vek \phi + \e^{\I \alpha_k}
		\Mat A \Vek \phi) \rangle | \bigr\}_{\ell=0, k=1}^{L-2,2}
	\end{equation*}
	up to global phase.
      \end{theorem}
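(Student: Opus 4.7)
The plan is to refine Theorem~\ref{thm: 3107a} by using the full-spark hypothesis to deal with the exceptional vectors for which some frame coefficients vanish. First I would bound the zero set $Z := \{\ell \in \{0,\dots,L-1\} : \langle \Vek x, \Mat A^\ell \Vek \phi\rangle = 0\}$ for a given nonzero $\Vek x$: if $|Z| \ge d$, then by full spark any $d$ of the corresponding frame vectors form a basis of $\BC^d$ to which $\Vek x$ is orthogonal, forcing $\Vek x = 0$; hence $|Z| \le d-1$. Because the magnitudes $|\langle \Vek x, \Mat A^\ell \Vek \phi\rangle|$ are part of the data, the set $Z$ is known from the measurements, and each $\ell \in Z$ provides the linear constraint $\Vek x \perp \Mat A^\ell \Vek \phi$.

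Next I would split the complement $\{0,\dots,L-1\}\setminus Z$ into maximal runs $S_1,\dots,S_r$ of consecutive integers, so that $r \le |Z|+1 \le d$. The decisive estimate is a pigeonhole bound on the run lengths. Writing $z := |Z|$ and assuming every $|S_j| \le d-z-1$, one obtains
\begin{equation*}
  L - z \;=\; \sum_{j=1}^{r} |S_j| \;\le\; (z+1)(d-z-1),
\end{equation*}
hence $L \le (z+1)(d-z-1) + z$. Viewed as a quadratic in $z$, the right-hand side attains its maximum at $z = (d-1)/2$ with value $(d-1)(d+3)/4 = d^2/4 + d/2 - 3/4$, which is strictly less than the integer $L$ by hypothesis. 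Consequently, at least one run $S_{j_0}$ must satisfy $|S_{j_0}| \ge d - |Z|$.

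With such a long run at hand, I would finish by combining polarisation with the zero constraints. Applying Theorem~\ref{thm: 3107a} inside $S_{j_0}$ recovers the coefficients $\{\langle \Vek x, \Mat A^\ell \Vek \phi\rangle\}_{\ell\in S_{j_0}}$ up to a single common phase $\theta$. The inequality $|S_{j_0}| + |Z| \ge d$ together with full spark ensures that $\{\Mat A^\ell \Vek \phi\}_{\ell\in S_{j_0}\cup Z}$ contains a basis of $\BC^d$, so the $S_{j_0}$-coefficients together with the vanishing conditions on $Z$ determine $\Vek x$ uniquely once $\theta$ is specified; this $\theta$ is precisely the global-phase freedom.

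The main obstacle is the sharp pigeonhole step in the middle paragraph: one must identify that the extremal bad configuration occurs near $z \approx d/2$, where the zeros are spaced so as to carve the nonzero indices into roughly $d/2+1$ runs of length about $d/2$, and then verify that the threshold $d^2/4 + d/2$ is exactly calibrated to exclude this configuration. The remainder of the argument is essentially bookkeeping, combining Theorem~\ref{thm: 3107a} with the linear constraints furnished by the vanishing measurements.
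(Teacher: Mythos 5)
Your proposal is correct and follows essentially the same route as the paper's proof: bound the number of vanishing coefficients by $d-1$ via full spark, use a pigeonhole count on runs of consecutive non-zero indices to extract a run of length at least $d-m$, propagate phases along that run by polarization, and combine the resulting coefficients with the known zero constraints to pin down $\Vek x$ up to global phase. Your version of the counting step (maximizing $(z+1)(d-z-1)+z$ and comparing with the integer $L$) is in fact slightly cleaner than the paper's, which compares $L$ against $\max_m (m+1)(d-m)$ and silently relies on integrality for odd $d$.
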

      
\begin{proof}
	Since $\{\Mat A^\ell \Vek \phi\}_{\ell=0}^{L-1}$ is a full-spark
	frame, we only need to know the phase of $d$ coefficients
	$\langle \Vek x, \Mat A^\ell \Vek \phi \rangle$ to recover $\Vek x$.
	Obviously, if at least $d$ coefficients are zero, then the unknown
	signal is zero everywhere.
	Now assume that  $m<d$ measurements
	$\absn{\langle \Vek x, \Mat A^\ell \Vek \phi \rangle}$ are zero.  As
	soon as we find $d-m$ consecutive non-zero measurements, we can
	transfer the relative phases to enough frame elements to recover
	$\Vek x$ using the extended measurement set.  In the worst case, we
	measure $d-m-1$ consecutive non-zeros followed by a zero.  After
	this pattern has been repeated $m$ times, the remaining measurements
	have to be non-zero. If we thus have at
	least $L\geq (m+1) (d-m)$ measurements, the existence of at least
	$d-m$ non-zero consecutive measurements is guaranteed.  Considering
	that the maximum over $(m+1)(d-m)$ is attained at
	$m \coloneqq \nicefrac{(d-1)}2$ for odd $d$ and
	$m \coloneqq \nicefrac d2$ for even $d$ finishes the proof.  \qed
\end{proof}

\begin{theorem}
	Let $\{\Mat A^\ell \Vek \phi\}_{\ell=0}^{L-1}$ be a full-spark frame
	for $\BC^d$, let $\alpha_1, \alpha_2 \in \BR$ be real numbers
	with $\alpha_1 - \alpha_2 \not\in \pi \BZ$, and let $J \in
        \{0,\dots, d-2 \}$.  If
	$L \ge \nicefrac{(d+1)^2}{4(J+1)}  + d,$
        then every $\Vek x\in\BC^d$ can be recovered from the samples
	\begin{equation*}
		\bigl\{ |\langle \Vek x, \Mat A^\ell \Vek \phi \rangle|
		\bigr\}_{\ell=0}^{L-1}
		\cup \bigl\{ | \langle \Vek x, \Mat A^\ell (\Vek \phi + \e^{\I \alpha_k}
		\Mat A^j \Vek \phi) \rangle | \bigr\}_{\ell=0, k=1, j=1}^{L-2,2,J+1}
	\end{equation*}
	up to global phase.
\end{theorem}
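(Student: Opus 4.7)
The plan is to imitate the proof of Theorem~\ref{thm:min-frame-length} but now exploit polarisation at every step $j\in\{1,\dots,J+1\}$ instead of only the step~$1$. Write $c_\ell \coloneqq \iProdn{\Vek x}{\Mat A^\ell\Vek\phi}$. The identity $\langle \Vek x, \Mat A^\ell(\Vek\phi+\e^{\I\alpha_k}\Mat A^j\Vek\phi)\rangle = c_\ell + \e^{\I\alpha_k}c_{\ell+j}$ turns each quadruple $(|c_\ell|,|c_{\ell+j}|,|c_\ell+\e^{\I\alpha_1}c_{\ell+j}|,|c_\ell+\e^{\I\alpha_2}c_{\ell+j}|)$ into exactly the input of Theorem~\ref{thm:polarization} applied to $z_1 = c_\ell$, $z_2 = c_{\ell+j}$. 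Consequently, for every admissible pair $(\ell,j)$ with $\ell+j\le L-1$ and both coefficients non-zero, the product $\overline{c_\ell}\,c_{\ell+j}$, hence the relative phase between $c_\ell$ and $c_{\ell+j}$, can be read off from the measurements.

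Now set $\mathcal{N}\coloneqq\{\ell:c_\ell\ne 0\}$, $m\coloneqq L-|\mathcal{N}|$, and let $G$ be the graph on $\mathcal{N}$ whose edges join indices whose distance is at most $J+1$. Chaining the recovered relative phases along edges of $G$ determines all $c_\ell$ in one connected component of $G$ up to a single common phase; together with the $m$ coefficients known to vanish, this furnishes $d$ exact coefficient values as soon as some component of $G$ has size at least $d-m$. Full spark then determines $\Vek x$ up to a global phase. If instead $m\ge d$, full spark forces $\Vek x=\Vek{0}$ since $d$ linearly independent frame vectors annihilate it, so from now on only $0\le m\le d-1$ needs to be treated.

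The remaining task is the combinatorial claim that the hypothesis $L\ge (d+1)^2/(4(J+1))+d$ guarantees a component of $G$ of size at least $d-m$. The key observation is that any gap of at most $J$ consecutive zeros is bridged by a single edge of $G$, so the only configuration of zeros that truly disconnects $G$ is a \emph{wall} of at least $J+1$ consecutive zeros. With $m$ zeros available, the adversary can form at most $\lfloor m/(J+1)\rfloor$ such walls and therefore at most $\lfloor m/(J+1)\rfloor + 1$ connected components; pigeonhole then forces the largest to contain at least $(L-m)/(\lfloor m/(J+1)\rfloor+1)$ indices. A sufficient condition for this count to reach $d-m$ is $L \ge m+(d-m)(\lfloor m/(J+1)\rfloor+1)$; using the crude estimate $\lfloor m/(J+1)\rfloor+1\le (m+1)/(J+1)+1$, this is in turn implied by $L \ge d + (m+1)(d-m)/(J+1)$. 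Maximising the parabola $(m+1)(d-m)$ over $m\in\{0,\dots,d-1\}$ at $m=\lfloor(d-1)/2\rfloor$ gives $(d+1)^2/4$, which yields exactly the hypothesised bound $L\ge (d+1)^2/(4(J+1))+d$. The only conceptually non-routine step is the wall-size observation; the remaining counting and optimisation are elementary.
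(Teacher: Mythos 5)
Your proof is correct and follows essentially the same route as the paper: polarization across gaps of up to $J$ zeros to recover relative phases, followed by a worst-case count of how zeros can be distributed, and a maximization of $(m+1)(d-m)$ over $m$ yielding the bound $(d+1)^2/(4(J+1))+d$. Your graph-and-pigeonhole formulation is simply a cleaner, more explicit rendering of the paper's ``worst-case pattern'' argument, and your handling of the edge cases (the restriction $\ell+j\le L-1$ and the case $m\ge d$) is, if anything, more careful than the original.
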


\begin{proof}
	The difference to the proof of Theorem~\ref{thm:min-frame-length} is that
	we may here jump over $J$ consecutive zeros while calculating the
	relative phases.  Thus, if $m$ measurements
	$\absn{\langle \Vek x, \Mat A^\ell \Vek \phi \rangle}$ are zero, the
	worst case scenario is that $d-m-J-1$ consecutive non-zero
	measurements are followed by $J+1$ zeros.  Repeating this pattern
	$\lfloor \nicefrac m{(J+1)} \rfloor$ times, and placing the
        remaining $m \Mod\ (J+1) \le m$
	zeros and $d-m$ non-zeros at the end -- so at most $d$
        elements, we require at most
	\begin{equation*}
		\bigl\lfloor \tfrac m{J+1} \bigr\rfloor (d-m) + (d -
                m) + m \Mod\ (J+1)
		\le \tfrac m{J+1} (d-m) + d
	\end{equation*}
	measurements to transfer the relative phases far enough to
	recover $\Vek x$.  The maximum on the right-hand side is attained at
	$m \coloneqq \nicefrac{(d+1)}2$ for odd $d$ and
	$m \coloneqq \nicefrac d2$ for even $d$, which finishes the proof.
	\qed
\end{proof}

\bibliographystyle{abbrv}
{\footnotesize \bibliography{literature}}

\begin{thebibliography}{10}

\bibitem{Ait44}
A.~C. Aitken.
\newblock {\em Determinants and Matrices}.
\newblock University Mathematical Texts. Oliver and Boyd, Edinburgh, 3rd
  edition, 1944.

\bibitem{ADGY19}
R.~Alaifari, I.~Daubechies, P.~Grohs, and R.~Yin.
\newblock Stable phase retrieval in infinite dimensions.
\newblock {\em Found Comput Math}, 19(4):869--900, 2019.

\bibitem{aldroubi2017iterative}
A.~Aldroubi, C.~Cabrelli, A.~F. Cakmak, U.~Molter, and A.~Petrosyan.
\newblock Iterative actions of normal operators.
\newblock {\em J Funct Anal}, 272(3):1121--1146, 2017.

\bibitem{ACMT17}
A.~Aldroubi, C.~Cabrelli, U.~Molter, and S.~Tang.
\newblock Dynamical sampling.
\newblock {\em Appl Comput Harmon Anal}, 42(3):378--401, 2017.

\bibitem{AHP19}
A.~Aldroubi, L.~Huang, and A.~Petrosyan.
\newblock Frames induced by the action of continuous powers of an operator.
\newblock {\em J Math Anal Appl}, 478(2):1059--1084, 2019.

\bibitem{AK16}
A.~Aldroubi and I.~Krishtal.
\newblock Krylov subspace methods in dynamical sampling.
\newblock {\em Sampl Theory Signal Image Process}, 15:9--20, 2016.

\bibitem{aldroubi2017phase}
A.~Aldroubi, I.~Krishtal, and S.~Tang.
\newblock Phase retrieval of evolving signals from space-time samples.
\newblock In {\em Proceedings of the SampTA~2017}, pages 46--49, 2017.

\bibitem{aldroubi2020phaseless}
A.~Aldroubi, I.~Krishtal, and S.~Tang.
\newblock Phaseless reconstruction from space-time samples.
\newblock {\em Appl Comput Harmon Anal}, 48(1):395--414, 2020.

\bibitem{AP17}
A.~Aldroubi and A.~Petrosyan.
\newblock Dynamical sampling and systems from iterative actions of operators.
\newblock In {\em Frames and Other Bases in Abstract and Function Spaces},
  chapter~2, pages 15--26. Birkh\"auser, Cham, 2017.

\bibitem{alexeev2014phase}
B.~Alexeev, A.~S. Bandeira, M.~Fickus, and D.~G. Mixon.
\newblock Phase retrieval with polarization.
\newblock {\em SIAM J Imaging Sci}, 7(1):35--66, 2014.

\bibitem{alexeev2012full}
B.~Alexeev, J.~Cahill, and D.~G. Mixon.
\newblock Full spark frames.
\newblock {\em J Fourier Anal Appl}, 18(6):1167--1194, 2012.

\bibitem{BBCE09}
R.~Balan, B.~G. Bodmann, P.~G. Casazza, and D.~Edidin.
\newblock Painless reconstruction from magnitudes of frame coefficients.
\newblock {\em J Fourier Anal Appl}, 15(4):488--501, 2009.

\bibitem{balan2006signal}
R.~Balan, P.~Casazza, and D.~Edidin.
\newblock On signal reconstruction without phase.
\newblock {\em Appl Comput Harmon Anal}, 20(3):345--356, 2006.

\bibitem{BCMN14}
A.~S. Bandeira, J.~Cahill, D.~G. Mixon, and A.~A. Nelson.
\newblock Saving phase: injectivity and stability for phase retrieval.
\newblock {\em Appl Comput Harmon Anal}, 37(1):106--125, 2014.

\bibitem{Bei17b}
R.~Beinert.
\newblock One-dimensional phase retrieval with additional interference
  measurements.
\newblock {\em Results Math}, 72(1):1--24, 2017.

\bibitem{BP15}
R.~Beinert and G.~Plonka.
\newblock Ambiguities in one-dimensional discrete phase retrieval from
  {F}ourier magnitudes.
\newblock {\em J Fourier Anal Appl}, 21(6):169--1198, 2015.

\bibitem{BP20}
R.~Beinert and G.~Plonka.
\newblock One-dimensional discrete-time phase retrieval.
\newblock In {\em Nanoscale Photonic Imaging}, Nanoscale Photonic Imaging,
  chapter~24, pages 603--627. Springer, Cham, 2020.

\bibitem{BBE17}
T.~Bendory, R.~Beinert, and Y.~C. Eldar.
\newblock {Fourier} phase retrieval: uniqueness and algorithms.
\newblock In {\em Compressed Sensing and its Applications}, Applied and
  Numerical Harmonic Analysis, chapter~2, pages 55--91. Birkh{\"a}ser, Cham,
  2017.

\bibitem{BS79}
Y.~M. Bruck and L.~G. Sodin.
\newblock On the ambiguity of the image reconstruction problem.
\newblock {\em Opt Commun}, 30(3):304--308, September 1979.

\bibitem{cabrelli2020dynamical}
C.~Cabrelli, U.~Molter, V.~Paternostro, and F.~Philipp.
\newblock Dynamical sampling on finite index sets.
\newblock {\em J Anal Math}, 140(2):637--667, 2020.

\bibitem{CESV13}
E.~J. Cand{\`e}s, Y.~C. Eldar, T.~Strohmer, and V.~Voroninski.
\newblock Phase retrieval via matrix completion.
\newblock {\em SIAM J Imaging Sci}, 6(1):199--225, 2013.

\bibitem{CSV13}
E.~J. Cand{\`e}s, T.~Strohmer, and V.~Voroninski.
\newblock {PhaseLift}: exact and stable signal recovery from magnitude
  measurements via convex programming.
\newblock {\em Comm Pure Appl Math}, 66(8):1241--1274, 2013.

\bibitem{christensen2016introduction}
O.~Christensen.
\newblock {\em An introduction to frames and {R}iesz bases}.
\newblock Springer, 2016.

\bibitem{christensen2017operator}
O.~Christensen and M.~Hasannasab.
\newblock Operator representations of frames: boundedness, duality, and
  stability.
\newblock {\em Integral Equations Operator Theory}, 88(4):483--499, 2017.

\bibitem{christensen2020frame}
O.~Christensen, M.~Hasannasab, and F.~Philipp.
\newblock Frame properties of operator orbits.
\newblock {\em Math Nachr}, 293(1):52--66, 2020.

\bibitem{christensen2018dynamical}
O.~Christensen, M.~Hasannasab, and E.~Rashidi.
\newblock Dynamical sampling and frame representations with bounded operators.
\newblock {\em J Math Anal Appl}, 463(2):634--644, 2018.

\bibitem{fienup1987phase}
J.~C. Dainty and J.~R. Fienup.
\newblock Phase retrieval and image reconstruction for astronomy.
\newblock In {\em Image Recovery {\upshape:} Theory and Application},
  chapter~7, pages 231--275. Academic Press, Orlando (Florida), 1987.

\bibitem{DVB08}
S.~Delvaux and M.~Van~Barel.
\newblock Rank-deficient submatrices of {F}ourier matrices.
\newblock {\em Linear Algebra Appl}, 429(7):1587--1605, 2008.

\bibitem{grohs2019mathematics}
P.~Grohs, S.~Koppensteiner, and M.~Rathmair.
\newblock The mathematics of phase retrieval.
\newblock {\em arXiv preprint arXiv:1901.07911}, 2019.

\bibitem{Hau91}
H.~A. Hauptman.
\newblock The phase problem of x-ray crystallography.
\newblock {\em Rep Prog Phys}, 54(11):1427--1454, November 1991.

\bibitem{Kal84}
D.~Kalman.
\newblock The generalized {V}andermonde matrix.
\newblock {\em Math Mag}, 57(1):15--21, 1984.

\bibitem{KH91}
W.~Kim and M.~H. Hayes.
\newblock The phase retrieval problem in x-ray crystallography.
\newblock In {\em Proceedings of the ICASSP~91}, volume~3, pages 1765--1768,
  1991.

\bibitem{KK14}
M.~V. Klibanov and V.~G. Kamburg.
\newblock Uniqueness of a one-dimensional phase retrieval problem.
\newblock {\em Inverse Problems}, 30(7):075004(10), July 2014.

\bibitem{KST95}
M.~V. Klibanov, P.~E. Sacks, and A.~V. Tikhonravov.
\newblock The phase retrieval problem.
\newblock {\em Inverse Problems}, 11(1):1--28, 1995.

\bibitem{LV09}
Y.~M. Lu and M.~Vetterli.
\newblock Spatial super-resolution of a diffusion field by temporal
  oversampling in sensor networks.
\newblock In {\em Proceedings of the ICASSP~2009}, pages 2249--2252, 2009.

\bibitem{Mac92}
I.~G. Macdonald.
\newblock Schur functions: theme and variations.
\newblock In {\em S\'{e}minaire {L}otharingien de {C}ombinatoire
  ({S}aint-{N}abor, 1992)}, volume 498 of {\em Publ Inst Rech Math Av}, pages
  5--39. Univ Louis Pasteur, Strasbourg, 1992.

\bibitem{Mac95}
I.~G. Macdonald.
\newblock {\em Symmetric functions and {H}all polynomials}.
\newblock Oxford Mathematical Monographs. Oxford University Press, Oxford, 2nd
  edition, 1995.

\bibitem{malikiosis2020full}
R.~D. Malikiosis and V.~Oussa.
\newblock Full spark frames in the orbit of a representation.
\newblock {\em Appl Comput Harmon Anal}, 49(3):791--814, 2020.

\bibitem{millane1990phase}
R.~P. Millane.
\newblock Phase retrieval in crystallography and optics.
\newblock {\em J Opt Soc Amer A}, 7(3):394--411, 1990.

\bibitem{patterson1934fourier}
A.~L. Patterson.
\newblock A {F}ourier series method for the determination of the components of
  interatomic distances in crystals.
\newblock {\em Phys Rev}, 46(5):372, 1934.

\bibitem{philipp2017bessel}
F.~Philipp.
\newblock Bessel orbits of normal operators.
\newblock {\em J Math Anal Appl}, 448(2):767--785, 2017.

\bibitem{RCLV11}
J.~{Ranieri}, A.~{Chebira}, Y.~M. {Lu}, and M.~{Vetterli}.
\newblock Sampling and reconstructing diffusion fields with localized sources.
\newblock In {\em Proceedings of the ICASSP~2011}, pages 4016--4019, 2011.

\bibitem{SSD+06}
B.~Seifert, H.~Stolz, M.~Donatelli, D.~Langemann, and M.~Tasche.
\newblock Multilevel {G}auss-{N}ewton methods for phase retrieval problems.
\newblock {\em J Phys A: Math Gen}, 39(16):4191--4206, 2006.

\bibitem{SST04}
B.~Seifert, H.~Stolz, and M.~Tasche.
\newblock Nontrivial ambiguities for blind frequency-resolved optical gating
  and the problem of uniqueness.
\newblock {\em J Opt Soc Am B}, 21(5):1089--1097, May 2004.

\bibitem{SECC+15}
Y.~{Shechtman}, Y.~C. {Eldar}, O.~{Cohen}, H.~N. {Chapman}, J.~{Miao}, and
  M.~{Segev}.
\newblock Phase retrieval with application to optical imaging: A contemporary
  overview.
\newblock {\em IEEE Signal Process Mag}, 32(3):87--109, 2015.

\bibitem{HHLO83}
P.~van Hove, M.~H. Hayes, J.~S. Lim, and A.~V. Oppenheim.
\newblock Signal reconstruction from signed {F}ourier transform magnitude.
\newblock {\em IEEE Trans Acoust Speech Signal Process}, ASSP-31(5):1286--1293,
  October 1983.

\end{thebibliography}

\end{document}